\newtheorem{thm}{Theorem}[section]
\newtheorem{cor}[thm]{Corollary}
\newtheorem{lem}[thm]{Lemma}
\newtheorem{defi}[thm]{Definition}
\journal{arXiv}
\begin{document}

\begin{frontmatter}

\title{Characterizations of the spectral radius of nonnegative weakly irreducible tensors via digraph}
\author[label3]{Lizhu Sun}\ead{sunlizhu678876@126.com}
\author[label3]{Baodong Zheng}\ead{zbd@hit.edu.cn}
\author[label2]{Yimin Wei}\ead{ymwei@fudan.edu.cn}
\author[label1]{Changjiang Bu}\ead{buchangjiang@hrbeu.edu.cn}
\address[label3]{School of Science, Harbin Institute of Technology, Harbin 150001, PR China}
\address[label2]{School of Mathematical Sciences and Shanghai Key Laboratory of Contemporary Applied Mathematics, Fudan University, Shanghai, 200433, PR China}
\address[label1]{College of Science, Harbin Engineering University, Harbin 150001, PR China}
\begin{abstract}
For a nonnegative weakly irreducible tensor $\mathcal{A}$, we give some characterizations of the spectral radius of $\mathcal{A}$, by using the digraph of tensors. As applications, some bounds on the spectral radius of the adjacency tensor and the signless Laplacian tensor of  the $k$-uniform hypergraphs are shown.
\end{abstract}

\begin{keyword}
Nonnegative tensor, Spectral radius, Digraph, $k$-uniform hypergraph\\
\emph{AMS classification:} 15A69, 15A18
\end{keyword}

\end{frontmatter}

\section{Introduction}
\label{Introduction }

An order $m$ dimension $n$ tensor $\mathcal{A}$ consist of $n^{m}$ complex entries,
$$\mathcal{A}=\left( {a_{i_1i_2\cdots i_m } } \right),~{1 \leqslant i_j\leqslant n}~ \left( {j = 1,2,\ldots ,m} \right).$$
The tensor $A=(a_{i_1i_2\cdots i_m})$ is called symmetric if $a_{i_1i_2\cdots i_m}=a_{\sigma(i_1)\sigma(i_2)\cdots \sigma(i_m)}$, where $\sigma$ is any permutation of the indices.
The tensor $\mathcal{A}$ is called nonnegative if all the entries $a_{i_1\ldots i_m } \geqslant 0$. Let $\mathbb{R}^{[m,n]}_+$ be the set of order $m$ dimension $n$ nonnegative tensors. And let $\mathbb{R}^{n}_{++}$ be the set of the dimension $n$ positive vectors (all the entries positive).

In 2005, the \textit{eigenvalue of tensors} is defined by Qi \cite{Qi2005} and Lim \cite{Lim 2005}, respectively.
 For a complex tensor $\mathcal{A}$ of order $m$ dimension $n$, a complex number $\lambda $ and a nonzero complex vector $x=(x_1,x_2,\ldots,x_n)^{\rm T}$ is called an eigenvalue and an eigenvector (corresponding to $\lambda$) of the tensor $\mathcal{A}$, respectively, if they satisfy
\[
\mathcal{A}x^{m - 1}  = \lambda x^{\left[ {m - 1} \right]},
\]
where $\mathcal{A}x^{m - 1}$ is a dimension $n$ vector with entry
 \[
\left( {\mathcal{A}x^{m - 1} } \right)_i  = \sum\limits_{i_2 ,...,i_m  = 1}^n {a_{ii_2 ...i_m } x_{i_2 } ...x_{i_m } },
i=1,2,\ldots,n,\]
and $x^{\left[ {m - 1} \right]}  = \left( {x_1^{m - 1} ,x_2^{m - 1},\ldots,x_n^{m - 1} } \right)^\mathrm{T}$ (see \cite{Qi2005}).
Let $\rho(\mathcal{A})=\sup\{|\lambda|:\lambda \in  {\rm spec}(\mathcal{A})\}$ be the spectral radius of tensor $\mathcal{A}$, where ${\rm spec}(\mathcal{A})$ is the set of all the eigenvalues of $\mathcal{A}$.

Recently, the spectral theory of tensors  has attracted much attention \cite{Bu2015, Chang, Chang 2011, Yang 2010, Yang 2013}.
In 2005, Lim \cite{Lim 2005} proposes the definition of \textit{irreducible tensors}.
In 2008, Chang et al. \cite{Chang} give the Perron-Frobenius Theorem for nonnegative irreducible tensors.
 It is shown that $\rho(\mathcal{A})$ is an eigenvalue of nonnegative irreducible tensors, and $\rho(\mathcal{A})$ is the only eigenvalue with nonnegative eigenvectors \cite{Chang}.
Similarly as the \textit{Collatz-Wielandt Theorem} of matrices, the \textit{Minimax Theorem} of the spectral radius of nonnegative irreducible tensors $\mathcal{A}$ is given as follows (see \cite{Chang})
\[
\min \limits_{x \in \mathbb{R}_{++} ^n }\max \limits_{1 \leqslant i \leqslant n} \frac{{\left( {\mathcal{A}x^{m - 1} } \right)_i }}
{{x_i^{m - 1} }} = \rho \left( \mathcal{A} \right) = \max \limits_{x \in \mathbb{R}_ {++} ^n} \min \limits_{1 \leqslant i \leqslant n} \frac{{\left( {\mathcal{A}x^{m - 1} } \right)_i }}
{{x_i^{m - 1} }}
.\]
Scholars pay much attention to find the largest eigenvalue of nonnegative irreducible tensors \cite{Chen2013, Ng, Ni2014, Zhou 2013}.
The \textit{weakly irreducible tensors} are defined by associated with tensors a digraph \cite{SF 2013, Pearson2014}.
\begin{defi}\label{def1}
An order $m$ dimension $n$ tensor $\mathcal{A}$ over real field is called weakly irreducible if the digraph $G_{\mathcal{A}}$ is strongly connected.
\end{defi}
Digraph $G_{\mathcal{A}}$ and the strongly connectivity are introduced in Section 2.
If tensor $\mathcal{A}$ is irreducible then it is weakly irreducible \cite{SF 2013, Pearson2014}. For the nonnegative weakly irreducible tensor $\mathcal{A}$, there exists a positive eigenvector corresponding to the eigenvalue $\rho(\mathcal{A})$ (see \cite{SF 2013}).

A hypergraph $\mathcal{H}=(V(\mathcal{H}), E(\mathcal{H}))$ is called \textit{$k$-uniform} if each edge of $\mathcal{H}$ contains exactly $k$ distinct vertices \cite{Cooper2012}.
The \textit{adjacency tensor} of $\mathcal{H}$, denoted by $\mathcal{A}_{\mathcal{H}}=(a_{i_1i_2\cdots i_k})$, is an order $k$
dimension $|V(\mathcal{H})|$ tensor with entries
\[
a_{i_1i_2\cdots i_k}=
\left\{ {\begin{array}{*{20}{c}}
   \frac{1}{(k-1)!}, ~ \{i_1i_2\cdots i_k\}\in E(\mathcal{H}), \\
   0 ,~~~~~~~\mbox{otherwises} .~~~~~ \\
\end{array}} \right.\]

The \textit{degree tensor} of the $k$-uniform uniform hypergraph $\mathcal{H}$, denoted by $\mathcal{D}_{\mathcal{H}}$, is an order $k$ dimension $|V(\mathcal{H})|$ diagonal tensor whose $(i\cdots i)$-diagonal entry is the degree of vertex $i$, $i=1,2,\ldots,|V(\mathcal{H})|$.
The tensor $\mathcal{Q}_{\mathcal{H}}=\mathcal{D}_{\mathcal{H}}+\mathcal{A}_{\mathcal{H}}$ is called the \textit{signless Laplacian tensor} of the $\mathcal{H}$.
Recently, the spectral theory of hypergraphs developed rapidly \cite{Cooper2012, Hu2015, Qi2014, Shao2015, Zhou2014}.

It is well-known that the nonnegative irreducible matrices are closely related to the digraphs \cite{RA1991}.
For a nonnegative irreducible matrices $M$ with all diagonal entries zero, Brualdi \cite{RA1982} gives the characterizations of the spectral radius of $M$ by using the associated digraph.

In this paper, we use the digraph of tensors to characterize the spectral radius of the nonnegative weakly irreducible tensors, which generalize the results of matrices to tensors \cite{RA1982}. By applying the characterizations, some bounds on the spectral radius of the adjacency tensor and signless Laplacian tensor of a $k$-uniform hypergraph are shown.

\section{Preliminary}

For an order $m$ dimension $n$ tensor $A=(a_{i_1i_2\cdots i_m})$, let $G_\mathcal{A}  = \left({V(G_\mathcal{A} ),E( G_\mathcal{A} )} \right)$ be the  digraph of the tensor $\mathcal{A }$ with vertex set
 $V (G_\mathcal{A} )= \left\{ {1,2,\ldots,n} \right\}$  and arc set $E( G_\mathcal{A} ) = \{ (i,j)| a_{ii_2 ...i_m }  \neq 0,j \in
\{ i_2 ,...,i_m  \}\}$ (see \cite{SF 2013, Pearson2014}).
If there exist directed paths from $i$ to $j$ and $j$ to $i$ for each $i, ~j\in V(G_\mathcal{A} )$ ($i\neq j$), then $ G_\mathcal{A}$ is called strongly connected.
Denote the set of the circuits in $G_\mathcal{A} $ by $C( G_\mathcal{A} )$ (Loops in the circuits are allowed).
Let $G_\mathcal{A}  ^ +  \left( v \right): = \left\{ {u\in V(G_\mathcal{A} ):\left( {v,u} \right) \in E( G_\mathcal{A} )} \right\}$.

Define a map $f$ from the vertex set of $G_\mathcal{A} $ to the real field, $f:V(G_\mathcal{A} )\rightarrow \mathbb{R}$, $f$ is called a  vertex labelling of $G_\mathcal{A} $.
By the Lemma 2.6 of \cite{RA1982}, we can get the following result.

\begin{lem}{\rm \cite{RA1982}}\label{lem1}
Let $G_\mathcal{A} $ be the digraph of $\mathcal{ A}$ with a vertex labelling $f$ on $V(G_\mathcal{A} )$. If $G_\mathcal{A} ^ +  \left( v \right)$ is nonempty for each $v \in V(G_\mathcal{A} )$, then there exist circuits $\{v_{i_1 },v_{i_2 } ,...,v_{i_k } ,v_{i_{k + 1} } \\= v_{i_1 }\}$ and $\{v_{t_1 },v_{t_2 } ,...,v_{t_s } ,v_{t_{s + 1} }  = v_{t_1} \}$ (Loops in the circuits are allowed) such that $f(v_{i_{j + 1} })=\max\{f(v):v\in G_\mathcal{A} ^ + ( v_{i_j} )\}$ and $f(v_{t_{l + 1} })=\min\{f(v):v\in G_\mathcal{A}^ + ( v_{t_l} )\}$ $j = 1,2,\ldots,k$, $ l = 1,2,\ldots,s$, respectively.
\end{lem}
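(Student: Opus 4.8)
The plan is to build each required circuit by a greedy walk that, at every vertex, steps to an out-neighbor carrying the extremal label, and then to extract a genuine cycle by finiteness. I focus on the max-circuit; the min-circuit is obtained identically with $\max$ replaced by $\min$.

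First, I would use the hypothesis that $G_{\mathcal{A}}^+(v)$ is nonempty for every $v \in V(G_{\mathcal{A}})$. Since $V(G_{\mathcal{A}}) = \{1,2,\ldots,n\}$ is finite, the set $\{f(u) : u \in G_{\mathcal{A}}^+(v)\}$ is finite and nonempty, so its maximum is attained. Choose, for each $v$, a vertex $g(v) \in G_{\mathcal{A}}^+(v)$ with $f(g(v)) = \max\{f(u) : u \in G_{\mathcal{A}}^+(v)\}$. Because the arc $(v, g(v))$ lies in $E(G_{\mathcal{A}})$ by the definition of $G_{\mathcal{A}}^+$, this yields a well-defined self-map $g : V(G_{\mathcal{A}}) \to V(G_{\mathcal{A}})$, that is, a functional subdigraph in which every vertex has out-degree one.

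Next I would extract a cycle from $g$. Starting from an arbitrary vertex $w_0$, form the orbit $w_0,\, w_1 = g(w_0),\, w_2 = g(w_1),\, \ldots$. As the vertex set is finite, there are indices $p < q$ with $w_p = w_q$; taking $q$ minimal, the vertices $w_p, w_{p+1}, \ldots, w_{q-1}$ are distinct and the segment $v_{i_1} = w_p,\, v_{i_2} = w_{p+1},\, \ldots,\, v_{i_k} = w_{q-1},\, v_{i_{k+1}} = w_q = w_p$ is a circuit of $G_{\mathcal{A}}$. By construction each consecutive step satisfies $f(v_{i_{j+1}}) = f(g(v_{i_j})) = \max\{f(v) : v \in G_{\mathcal{A}}^+(v_{i_j})\}$, which is exactly the asserted max-property. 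A length-one circuit occurs precisely when $g(v) = v$, i.e. when $v$ is its own maximal out-neighbor; this is the reason loops must be permitted. Replacing $g$ by the analogous minimizing map $h(v) \in G_{\mathcal{A}}^+(v)$ with $f(h(v)) = \min\{f(u) : u \in G_{\mathcal{A}}^+(v)\}$ produces the second circuit.

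There is no substantial obstacle here: the argument is the standard observation that iterating a self-map of a finite set is eventually periodic, specialized to the greedy extremal successor. The only points requiring care are checking that the extrema are actually attained (guaranteed by the nonemptiness and finiteness of each out-neighborhood) and isolating a true cycle rather than merely an eventually periodic tail, which is handled by taking the first repeated vertex. This mirrors Lemma~2.6 of \cite{RA1982}, the difference being only that $G_{\mathcal{A}}$ is the tensor digraph rather than the digraph of a matrix.
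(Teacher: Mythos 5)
Your proof is correct: the greedy successor map $g$, the finiteness argument giving eventual periodicity, and the extraction of the first repeated segment as the circuit together establish exactly the stated property, and the min-case is symmetric as you say. The paper itself offers no proof of this lemma --- it is quoted directly from Lemma~2.6 of Brualdi's paper --- but your argument is the standard one underlying that result, so nothing further is needed.
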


\begin{lem}{\rm \cite{SF 2013}}\label{lem3}
Let $\mathcal{A}\in \mathbb{R}^{[m,n]}_+$ be a weakly irreducible tensor. Then $\rho(\mathcal{A})$ is an eigenvalue of $\mathcal{A}$, and
there exists a unique positive eigenvector corresponding to $\rho(\mathcal{A})$ up to a multiplicative constant.
\end{lem}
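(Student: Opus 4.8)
The plan is to establish the three assertions—that $\rho(\mathcal{A})$ is an eigenvalue, that it has a positive eigenvector, and that this eigenvector is unique up to scaling—by reducing existence to the irreducible case via perturbation, proving uniqueness by an argmax-ratio argument, and treating positivity separately. Throughout I would freely use the Perron--Frobenius theorem for nonnegative \emph{irreducible} tensors quoted in the Introduction, together with the standard facts that the spectral radius of a nonnegative tensor is monotone and continuous in its entries.

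First I would prove existence. Let $\mathcal{E}$ be the order $m$ dimension $n$ tensor all of whose entries equal $1$, and for $\varepsilon>0$ set $\mathcal{A}_\varepsilon=\mathcal{A}+\varepsilon\mathcal{E}$. Every entry of $\mathcal{A}_\varepsilon$ is positive, so $\mathcal{A}_\varepsilon$ is irreducible; by the Perron--Frobenius theorem for nonnegative irreducible tensors, $\rho(\mathcal{A}_\varepsilon)$ is an eigenvalue with a positive eigenvector $x^{(\varepsilon)}$, which I normalize by $\max_i x^{(\varepsilon)}_i=1$. As $\varepsilon\downarrow 0$, continuity of $\rho$ gives $\rho(\mathcal{A}_\varepsilon)\to\rho(\mathcal{A})$, and by compactness some subsequence of the normalized vectors converges to $x^*\ge 0$ with $x^*\neq 0$. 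Passing to the limit in $\mathcal{A}_\varepsilon(x^{(\varepsilon)})^{m-1}=\rho(\mathcal{A}_\varepsilon)(x^{(\varepsilon)})^{[m-1]}$ yields $\mathcal{A}(x^*)^{m-1}=\rho(\mathcal{A})(x^*)^{[m-1]}$, so $\rho(\mathcal{A})$ is an eigenvalue with a nonnegative eigenvector. For uniqueness, suppose $x,y\in\mathbb{R}^n_{++}$ are both eigenvectors for $\rho:=\rho(\mathcal{A})$, put $t=\max_i x_i/y_i$, and let $T=\{i:x_i=t\,y_i\}\neq\emptyset$, so that $x\le t\,y$ componentwise. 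For any $i\in T$,
\[
\rho\,x_i^{m-1}=(\mathcal{A}x^{m-1})_i=\sum a_{i i_2\cdots i_m}x_{i_2}\cdots x_{i_m}\le t^{m-1}(\mathcal{A}y^{m-1})_i=\rho\,(t\,y_i)^{m-1}=\rho\,x_i^{m-1},
\]
so equality holds throughout; since every summand is nonnegative and $x,y>0$, each nonzero entry $a_{i i_2\cdots i_m}$ forces $x_{i_j}=t\,y_{i_j}$ for all its indices. Hence every out-neighbour of $i$ in $G_\mathcal{A}$ lies in $T$, so $T$ is closed under taking out-neighbours; as $G_\mathcal{A}$ is strongly connected and $T\neq\emptyset$, we conclude $T=V(G_\mathcal{A})$, i.e. $x=t\,y$.

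Finally, positivity, which I expect to be the main obstacle. Let $x^*\ge0$ be the eigenvector from the first step and $S=\mathrm{supp}(x^*)$; restricting the eigenequation to $S$ shows $x^*_S>0$ is a positive eigenvector of the principal subtensor $\mathcal{A}[S]$ for $\rho$, and combined with monotonicity of $\rho$ this forces $\rho(\mathcal{A}[S])=\rho(\mathcal{A})$. The goal is to rule this out when $\emptyset\neq S\subsetneq V$, equivalently to show $\rho(\mathcal{A}[S])<\rho(\mathcal{A})$. Here the matrix argument—partitioning indices into $S,\bar S$ and reading off a vanishing block—breaks down, since a single nonzero entry $a_{i i_2\cdots i_m}$ couples several indices simultaneously, so the vanishing of $(\mathcal{A}(x^*)^{m-1})_i$ for $i\notin S$ does \emph{not} forbid arcs from $\bar S$ into $S$. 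Instead I would feed a strictly positive test vector into the lower Collatz--Wielandt estimate $\rho(\mathcal{A})\ge\min_i(\mathcal{A}y^{m-1})_i/y_i^{m-1}$, valid for every $y\in\mathbb{R}^n_{++}$: take $y_i=x^*_i$ on $S$ and a small positive weight on $\bar S$, arranged so that on each vertex of $\bar S$ a hyperedge pointing closer to $S$ dominates, making that ratio exceed $\rho$ while the ratios on $S$ stay $\ge\rho$. Strong connectivity guarantees that every vertex of $\bar S$ has a directed path to $S$, which is what makes such weighting possible; the delicate point, and the crux of the lemma, is to calibrate the weights so the estimate is uniform even when an entry's remaining indices sit far from $S$, and this is precisely where the full strength of weak irreducibility enters.
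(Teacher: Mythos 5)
The paper does not prove this lemma at all: it is quoted from Friedland, Gaubert and Han \cite{SF 2013}, so there is no internal proof to compare against. Judged on its own merits, your existence step (perturb to the positive tensor $\mathcal{A}+\varepsilon\mathcal{E}$, apply Perron--Frobenius for irreducible tensors, normalize and pass to the limit) and your uniqueness step (the set $T$ of indices attaining $\max_i x_i/y_i$ is closed under out-neighbours in $G_\mathcal{A}$, hence equals $V(G_\mathcal{A})$ by strong connectivity) are both correct, modulo the standard facts you cite.

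The positivity step, however, is not a proof, and the mechanism you propose cannot work as stated. You aim to contradict the lower Collatz--Wielandt bound $\rho(\mathcal{A})\ge\min_i(\mathcal{A}y^{m-1})_i/y_i^{m-1}$ by producing $y>0$ whose ratios exceed $\rho$ on $\bar S$ ``while the ratios on $S$ stay $\ge\rho$.'' But a configuration with all ratios $\ge\rho$ and only some of them strictly larger is perfectly consistent with that bound: the minimum can still equal $\rho$, and it will whenever some $i\in S$ has every nonzero entry $a_{ii_2\cdots i_m}$ supported entirely inside $S$ --- strong connectivity forces at least one vertex of $S$ to have an arc into $\bar S$, not all of them. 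So no contradiction is reached; you would need every ratio, including those at such interior vertices of $S$, to be strictly larger than $\rho$, and your construction does not achieve that. This is precisely the hard part of the theorem, and the cited source does not prove it this way: Friedland, Gaubert and Han work with the monotone homogeneous map $x\mapsto(\mathcal{A}x^{m-1})^{[1/(m-1)]}$ on the open cone and invoke the Gaubert--Gunawardena criterion (strong connectivity of an associated digraph implies existence of an eigenvector in the interior of the cone), with uniqueness handled via Hilbert's projective metric. As it stands, the central claim of the lemma --- positivity of the eigenvector --- remains unproved in your write-up, as you yourself acknowledge.
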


\begin{lem} {\rm \cite{shao 2013}}\label{lem5}
For an order $m$ dimension $n$ tensor $\mathcal{A}={(a_{i_1i_2\ldots i_m})}$ and an invertible diagonal matrix $D={\rm diag}(d_{11}, d_{22},\ldots, d_{nn})$, $\mathcal{B}=D^{-(m-1)}\mathcal{A}D$ is an order $m$ dimension $n$ tensor with entries
\[
b_{i_1i_2\ldots i_m}= d_{i_1 i_1}^{-(m-1)}a_{i_1i_2\ldots i_m}d_{i_2i_2}\cdots d_{i_mi_m}.
\]
In this case, $\mathcal{A}$ and $\mathcal{B}$ are called diagonal similar, and $\mathcal{A}$ and $\mathcal{B}$ have the same spectrum.
\end{lem}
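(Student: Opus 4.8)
The plan is to verify the stated entry formula directly from the definition of the general tensor--matrix product, and then to exhibit an explicit bijection between the eigenpairs of $\mathcal{A}$ and those of $\mathcal{B}$ induced by the diagonal scaling. First I would confirm the formula for the entries of $\mathcal{B}=D^{-(m-1)}\mathcal{A}D$. Left-multiplication by $D^{-(m-1)}$ acts only on the first index, while right-multiplication by $D$ acts on each of the remaining $m-1$ indices; because $D$ is diagonal, every summation in these products collapses to a single surviving term. Thus the $(i_1i_2\cdots i_m)$ entry is $d_{i_1 i_1}^{-(m-1)}a_{i_1i_2\cdots i_m}d_{i_2i_2}\cdots d_{i_mi_m}$, exactly as asserted.

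The core step is the eigenvalue correspondence. Suppose $\lambda\in{\rm spec}(\mathcal{A})$ with eigenvector $x$, so that $\mathcal{A}x^{m-1}=\lambda x^{[m-1]}$. I would set $y=D^{-1}x$, which is nonzero since $D$ is invertible, and compute $(\mathcal{B}y^{m-1})_i$ using the entry formula together with $y_{i_j}=d_{i_ji_j}^{-1}x_{i_j}$. For each fixed $i$, the factors $d_{i_ji_j}$ carried by $b_{ii_2\cdots i_m}$ cancel the factors $d_{i_ji_j}^{-1}$ coming from $y_{i_j}$ in the slots $j=2,\ldots,m$, leaving $(\mathcal{B}y^{m-1})_i=d_{ii}^{-(m-1)}(\mathcal{A}x^{m-1})_i=\lambda\, d_{ii}^{-(m-1)}x_i^{m-1}$. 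Since $y_i^{m-1}=d_{ii}^{-(m-1)}x_i^{m-1}$, this is precisely $(\mathcal{B}y^{m-1})_i=\lambda y_i^{m-1}$ for every $i$, i.e. $\mathcal{B}y^{m-1}=\lambda y^{[m-1]}$, so $\lambda\in{\rm spec}(\mathcal{B})$.

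For the reverse inclusion I would exploit the symmetry of the relation: from $\mathcal{B}=D^{-(m-1)}\mathcal{A}D$ one recovers $\mathcal{A}=(D^{-1})^{-(m-1)}\mathcal{B}(D^{-1})$, which has the same form with the invertible diagonal matrix $D^{-1}$ in place of $D$. Applying the computation above with $D^{-1}$ then gives ${\rm spec}(\mathcal{B})\subseteq{\rm spec}(\mathcal{A})$, and the two spectra coincide; moreover $x\mapsto D^{-1}x$ is a linear bijection, so it matches eigenvectors for a common $\lambda$ on the nose. I do not expect a genuine difficulty here: the only point demanding care is the bookkeeping of the product convention, namely identifying which index each matrix factor acts on, so that the entry formula is pinned down correctly. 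Once that is fixed, the cancellation driving the eigenvalue correspondence is automatic.
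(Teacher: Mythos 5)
Your argument is correct: the cancellation $(\mathcal{B}y^{m-1})_i=d_{ii}^{-(m-1)}(\mathcal{A}x^{m-1})_i$ with $y=D^{-1}x$ does give a bijection of eigenpairs, and the symmetry $\mathcal{A}=(D^{-1})^{-(m-1)}\mathcal{B}(D^{-1})$ handles the reverse inclusion. Note, however, that the paper does not prove this lemma at all --- it is quoted from Shao's work on the general tensor product --- so there is no in-paper argument to match yours against. Compared with the source: Shao's proof establishes the stronger fact that diagonally similar tensors have the same characteristic polynomial (via properties of the resultant and associativity of the general product), hence the same spectrum counted with multiplicity, whereas your eigenvector-transport argument only yields equality of the spectra as sets. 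Since this paper defines ${\rm spec}(\mathcal{A})$ as the set of eigenvalues and only ever uses $\rho(\mathcal{A})=\rho(\mathcal{B})$, your more elementary route fully suffices here; it just would not recover the multiplicity statement if one needed it.
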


\begin{lem}\label{lem6}{\rm \cite{Pearson2014}}
Let $\mathcal{H}$ be an $k$-uniform hypergraph. Then $\mathcal{A}_{\mathcal{H}}\in \mathbb{R}^{[m,n]}_+$ (and $\mathcal{Q}_{\mathcal{H}}\in \mathbb{R}^{[m,n]}_+$) is weakly irreducible if and only if $\mathcal{H}$ is connected.

\end{lem}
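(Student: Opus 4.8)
The plan is to reduce weak irreducibility of $\mathcal{A}_{\mathcal{H}}$ and $\mathcal{Q}_{\mathcal{H}}$ to the ordinary connectivity of $\mathcal{H}$ by making the arc structure of the associated digraph explicit. First I would unwind the definition of the adjacency tensor: a nonzero entry $a_{i_1 i_2 \cdots i_k}$ occurs precisely when $\{i_1, i_2, \ldots, i_k\}$ is an edge of $\mathcal{H}$, which forces the indices $i_1, \ldots, i_k$ to be pairwise distinct. Consequently there is an arc $(i,j)$ in $G_{\mathcal{A}_{\mathcal{H}}}$ (necessarily with $i \neq j$) if and only if $i$ and $j$ lie together in some common edge of $\mathcal{H}$. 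In particular $G_{\mathcal{A}_{\mathcal{H}}}$ has no loops and is \emph{symmetric}: $(i,j) \in E(G_{\mathcal{A}_{\mathcal{H}}})$ iff $(j,i) \in E(G_{\mathcal{A}_{\mathcal{H}}})$.

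The next step would exploit this symmetry. In a symmetric digraph a directed path from $i$ to $j$ yields one from $j$ to $i$ by reversing each arc, so $G_{\mathcal{A}_{\mathcal{H}}}$ is strongly connected exactly when its underlying undirected graph (the graph on $V(\mathcal{H})$ joining two vertices whenever they share an edge) is connected. I would then dispatch both implications through this correspondence. If $\mathcal{H}$ is connected, then for any two vertices $u,w$ there is a chain of edges linking them, producing vertices $u = v_0, v_1, \ldots, v_t = w$ with each consecutive pair $\{v_r, v_{r+1}\}$ contained in a common edge; each such pair gives an arc $(v_r, v_{r+1})$, and concatenation produces a directed $u$-to-$w$ walk, with symmetry supplying the reverse walk, so $G_{\mathcal{A}_{\mathcal{H}}}$ is strongly connected. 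Conversely, a directed path from $u$ to $w$ in $G_{\mathcal{A}_{\mathcal{H}}}$ reads off a chain of edges of $\mathcal{H}$ joining $u$ to $w$, so strong connectivity of the digraph forces connectivity of $\mathcal{H}$.

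For the signless Laplacian tensor I would observe that $\mathcal{Q}_{\mathcal{H}} = \mathcal{D}_{\mathcal{H}} + \mathcal{A}_{\mathcal{H}}$ differs from $\mathcal{A}_{\mathcal{H}}$ only in the diagonal entries $q_{ii\cdots i} = d_i$, since $\mathcal{D}_{\mathcal{H}}$ is diagonal. A nonzero diagonal entry contributes only the loop $(i,i)$ to $G_{\mathcal{Q}_{\mathcal{H}}}$, and loops are irrelevant to the existence of directed paths between distinct vertices. Hence $G_{\mathcal{Q}_{\mathcal{H}}}$ and $G_{\mathcal{A}_{\mathcal{H}}}$ carry the same arcs between distinct vertices, so one is strongly connected iff the other is, and the equivalence transfers verbatim.

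This is a largely bookkeeping argument, and the only genuine care lies in pinning down the definition of hypergraph connectedness and verifying that the arc relation of $G_{\mathcal{A}_{\mathcal{H}}}$ coincides with ``sharing an edge.'' I regard the symmetry observation as the crux, since it is what collapses strong connectivity down to ordinary undirected connectivity; without it one would have to argue separately for directed paths in each direction, and for the Laplacian one would additionally have to confirm that the added diagonal terms cannot create new arcs between distinct vertices.
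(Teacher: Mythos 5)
Your argument is correct: the arc set of $G_{\mathcal{A}_{\mathcal{H}}}$ is exactly the symmetric, loopless relation ``$i$ and $j$ lie in a common edge,'' so strong connectivity collapses to connectivity of the $2$-section of $\mathcal{H}$, and the diagonal tensor $\mathcal{D}_{\mathcal{H}}$ only adds loops, which are irrelevant. Note, however, that the paper offers no proof to compare against --- this lemma is imported verbatim from the cited reference of Pearson and Zhang --- and your reasoning is the standard argument one would find there, so there is no genuine divergence of method to report.
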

\section{Main results}

For a tensor $\mathcal{A}=\left( {a_{i_1 i_2 \cdots i_m } } \right)\in \mathbb{R}^{[m,n]}_+$, we denote the sum of $i$-th slice of $\mathcal{A}$ by $K_i= \sum\limits^n_{i_2,\ldots,i_m =1}{a_{ii_2 \cdots i_m } } $, $i=1,2,\ldots,n$. Let $|\gamma|$ be the length of the circuit $\gamma\in C (G_\mathcal{A} )$.

We first give two results on the bounds of spectral radius for nonnegative weakly irreducible tensors, which extend Theorem 4.7 and Corollary 4.6, 4.8 of \cite{RA1982} to tensors.

\begin{thm}\label{thm2}
Let $\mathcal{A} = \left( {a_{i_1 i_2 ...i_m } } \right) \in \mathbb{R}^{[m,n]}_+$ be a weakly irreducible tensor. Then
\[
\min_{\gamma \in C(G_\mathcal{ A})}\left(\prod\limits_{i\in \gamma } {K_i }\right)^{\frac{1}{|\gamma|}}\leqslant \rho (\mathcal{A})   \leqslant \max_{\gamma \in C(G_\mathcal{ A})} \left(\prod\limits_{i\in \gamma } {K_i }\right)^{\frac{1}{|\gamma|}}.
\]
\end{thm}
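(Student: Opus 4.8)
The plan is to combine the positive Perron eigenvector guaranteed by Lemma~\ref{lem3} with the circuit-selection Lemma~\ref{lem1}, adapting the matrix argument of \cite{RA1982} so that it respects the degree-$(m-1)$ homogeneity of the tensor eigenvalue equation.

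First I would fix, by Lemma~\ref{lem3}, a positive eigenvector $x=(x_1,\ldots,x_n)^{\mathrm T}\in\mathbb{R}_{++}^n$ associated with $\rho:=\rho(\mathcal A)$, so that for every $i$
\[
\rho\, x_i^{m-1}=(\mathcal A x^{m-1})_i=\sum_{i_2,\ldots,i_m=1}^n a_{ii_2\cdots i_m}x_{i_2}\cdots x_{i_m}.
\]
Because $\mathcal A$ is weakly irreducible, $G_{\mathcal A}$ is strongly connected, so $G_{\mathcal A}^{+}(v)\neq\emptyset$ for every vertex $v$, and Lemma~\ref{lem1} is applicable with the vertex labelling $f(i)=x_i$.

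The crux is a single-slice estimate. Whenever $a_{ii_2\cdots i_m}\neq 0$, each index in $\{i_2,\ldots,i_m\}$ belongs to $G_{\mathcal A}^{+}(i)$ by the definition of the arc set, so each factor $x_{i_j}$ lies between $\min\{x_v:v\in G_{\mathcal A}^{+}(i)\}$ and $\max\{x_v:v\in G_{\mathcal A}^{+}(i)\}$. Multiplying the $m-1$ factors and then summing over the slice yields, for every $i$,
\[
K_i\Bigl(\min_{v\in G_{\mathcal A}^{+}(i)}x_v\Bigr)^{m-1}\leqslant\rho\, x_i^{m-1}\leqslant K_i\Bigl(\max_{v\in G_{\mathcal A}^{+}(i)}x_v\Bigr)^{m-1}.
\]

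For the upper bound I would then take the circuit $\gamma=\{v_{i_1},\ldots,v_{i_k},v_{i_{k+1}}=v_{i_1}\}$ furnished by Lemma~\ref{lem1}, along which $x_{v_{i_{j+1}}}=\max\{x_v:v\in G_{\mathcal A}^{+}(v_{i_j})\}$. The right-hand estimate then reads $\rho\, x_{v_{i_j}}^{m-1}\leqslant K_{v_{i_j}}\,x_{v_{i_{j+1}}}^{m-1}$; taking the product over $j=1,\ldots,k$ and cancelling the cyclically matching positive factor $\prod_{j}x_{v_{i_j}}^{m-1}$ leaves $\rho^{k}\leqslant\prod_{i\in\gamma}K_i$, whence $\rho\leqslant(\prod_{i\in\gamma}K_i)^{1/|\gamma|}\leqslant\max_{\gamma\in C(G_{\mathcal A})}(\prod_{i\in\gamma}K_i)^{1/|\gamma|}$. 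The lower bound follows symmetrically from the min-circuit. I expect the only delicate points to be verifying that weak irreducibility indeed forces every $G_{\mathcal A}^{+}(v)$ to be nonempty, which is exactly the hypothesis needed for Lemma~\ref{lem1}, and keeping track of possibly repeated vertices along the circuit; since the cancellation is purely cyclic, multiplicities cause no difficulty.
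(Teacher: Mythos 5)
Your proposal is correct and follows essentially the same route as the paper: both fix the positive Perron eigenvector from Lemma~\ref{lem3}, label vertices by its entries, extract the max- and min-circuits via Lemma~\ref{lem1}, bound each slice sum by $\rho\, x_{i_j}^{m-1}\leqslant K_{i_j}x_{i_{j+1}}^{m-1}$ (resp.\ the reverse), and cancel the cyclic product of the $x_{i_j}^{m-1}$. The only difference is cosmetic: you isolate the single-slice estimate as an explicit two-sided inequality before specializing to the circuits, whereas the paper writes the upper-bound chain directly along the circuit.
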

\begin{proof}
By Lemma \ref{lem3}, we suppose that
$x = \left( {x_1 ,x_2, \ldots,x_n } \right)^{\rm T}$ is a positive eigenvector corresponding to the eigenvalue $\rho(\mathcal{A})$. Let $f(i)=x_i$ ($i \in V(G_\mathcal{A})$) be the vertex labelling of digraph $G_\mathcal{A}$. From Definition \ref{def1}, we have $G_\mathcal{A}^+(i)$ is nonempty for each $i\in V(G_\mathcal{A})$.
 Lemma \ref{lem1} gives that there exists at least one circuit $\gamma_1  = \{i_1 ,i_2,\ldots,i_p ,i_{p + 1}  = i_1\}$ such that
 $x_{i_{j + 1} }  \geqslant x_k $, for each $ k \in G_\mathcal{A} ^ +  \left( {i_j } \right)$, $j = 1,2,\ldots,p$. Hence,
\begin{align*}
   \rho(\mathcal{A}) x_{i_j }^{m - 1} & = \sum\limits_{ k_2 ,\cdots,k_m =1}^n  a_{i_j k_2 ...k_m } x_{k_2} \cdots x_{k_m }  \hfill \\
   &\leqslant \left( {\sum\limits_{ k_2 ,\cdots,k_m =1}^n  {a_{i_j k_2 ...k_m } }} \right)x_{i_{j + 1} }^{m - 1}  \hfill \\
  & = K_{i_j } x_{i_{j + 1} }^{m - 1}  \hfill ,
\end{align*}
for $j = 1,2,\ldots,p$. Thus
\[
\left(\rho (\mathcal{A})\right)^p \prod\limits_{j = 1}^p x_{i_j }^{m - 1}   \leqslant \prod\limits_{j = 1}^p {K_{i_j } x_{i_{j + 1} }^{m - 1} }
.\]
Note that $x = \left( {x_1 ,x_1,\ldots,x_n } \right)^{\rm T}$ is positive, so we can get \[
\rho (\mathcal{A})   \leqslant \left(\prod\limits_{j = 1}^p {K_{i_j } }\right)^{\frac{1}{p}}
,\]
that is
\[
\rho (\mathcal{A})   \leqslant \left(\prod\limits_{i\in \gamma_1 } {K_i }\right)^{\frac{1}{p}}.
\]
Lemma \ref{lem1} also shows that there exists at least one circuit $\gamma_2=\{v_{t_1 } ,v_{t_2 },\ldots,v_{t_s } ,v_{t_{s + 1} }  = v_{t_1 }\}$ such that
 $x_{t_{l + 1} }  \leqslant x_k $ for each $ k \in G_\mathcal{A} ^ + ( t_l )$, $l = 1,2,\ldots,s$.
Similarly as the above proof, we can get
\[
\rho (\mathcal{A})  \geqslant \left(\prod\limits_{i\in \gamma_2 } {K_i }\right)^{\frac{1}{s}}.
\]
Thus
\[
\min_{\gamma \in C(G_\mathcal{ A})}\left(\prod\limits_{i\in \gamma } {K_i }\right)^{\frac{1}{|\gamma|}}
\leqslant \rho (\mathcal{A})   \leqslant \max_{\gamma \in C(G_\mathcal{ A})} \left(\prod\limits_{i\in \gamma } {K_i }\right)^{\frac{1}{|\gamma|}}.
\]
\end{proof}
\textbf{Remark.} It is easy to see that if $K_1=K_2=\cdots=K_n$, the equalities in the above theorem hold.

Lemma \ref{lem6} gives that the adjacency tensor (and signless Laplacian tensor) of a connected hypergraph $\mathcal{A}_{\mathcal{H}}$ (and $\mathcal{Q}_{\mathcal{H}}$) is nonnegative weakly irreducible, so $\rho(\mathcal{A}_{\mathcal{H}})$ (and $\rho(\mathcal{Q}_{\mathcal{H}})$) is an eigenvalue of $\mathcal{A}_{\mathcal{H}}$ (and $\mathcal{Q}_{\mathcal{H}}$).
In \cite{Cooper2012}, it is shown that the largest eigenvalue $\lambda_{\max}$ of $\mathcal{A}_{\mathcal{H}}$ is between the maximum degree $d_{\max}$ and the average degree $\overline{d}$ of $\mathcal{H}$, $\overline{d} \leqslant\lambda_{\max}\leqslant d_{\max} $. By using Theorem \ref{thm2}, we also show the bounds on the largest eigenvalue of adjacency tensor and signless Laplacian tensor in terms of the degrees.
\begin{thm}\label{thm5}
Let $\mathcal{H}$ be a connected $k$-uniform hypergraph with $n$ vertices. Suppose that $d_i$ is the degree of vertex $i$, $i=1,2,\ldots,n$. Then
\[
\min_{\gamma \in C(G_{\mathcal{ A}_{\mathcal{H}}})}\left(\prod\limits_{i\in \gamma } {d_i }\right)^{\frac{1}{|\gamma|}}\leqslant \rho(\mathcal{A}_{\mathcal{H}})   \leqslant \max_{\gamma \in C(_{\mathcal{ A}_{\mathcal{H}}})} \left(\prod\limits_{i\in \gamma } {d_i }\right)^{\frac{1}{|\gamma|}},
\]
and
\[
\min_{\gamma \in C(G_{\mathcal{ Q}_{\mathcal{H}}})}\left(\prod\limits_{i\in \gamma } {2d_i }\right)^{\frac{1}{|\gamma|}}\leqslant \rho(\mathcal{Q}_{\mathcal{H}})   \leqslant \max_{\gamma \in C(_{\mathcal{ Q}_{\mathcal{H}}})} \left(\prod\limits_{i\in \gamma } {2d_i }\right)^{\frac{1}{|\gamma|}}.
\]
\end{thm}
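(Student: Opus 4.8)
The plan is to deduce Theorem~\ref{thm5} directly from Theorem~\ref{thm2} by computing the slice sums $K_i$ of $\mathcal{A}_{\mathcal{H}}$ and $\mathcal{Q}_{\mathcal{H}}$. First I would invoke Lemma~\ref{lem6}: since $\mathcal{H}$ is connected, both $\mathcal{A}_{\mathcal{H}}$ and $\mathcal{Q}_{\mathcal{H}}$ are nonnegative weakly irreducible tensors, so the hypotheses of Theorem~\ref{thm2} are satisfied and its bounds hold once the correct slice sums are substituted.

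The main computation is to evaluate $K_i$ in each case. For the adjacency tensor, I would fix a vertex $i$ and consider $K_i=\sum_{i_2,\ldots,i_k=1}^n a_{i i_2\cdots i_k}$. Each nonzero term corresponds to a tuple $(i,i_2,\ldots,i_k)$ whose underlying set $\{i,i_2,\ldots,i_k\}$ is an edge of $\mathcal{H}$ containing $i$. For a fixed such edge, the remaining $k-1$ distinct vertices can be placed into the positions $i_2,\ldots,i_k$ in exactly $(k-1)!$ distinct orderings, and each ordering contributes the entry value $\frac{1}{(k-1)!}$. These cancel, so every edge containing $i$ contributes exactly $1$ to $K_i$, whence $K_i=d_i$, the degree of vertex $i$.

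For the signless Laplacian tensor $\mathcal{Q}_{\mathcal{H}}=\mathcal{D}_{\mathcal{H}}+\mathcal{A}_{\mathcal{H}}$, the slice sum splits additively. The diagonal degree tensor $\mathcal{D}_{\mathcal{H}}$ contributes only its $(i\cdots i)$-entry $d_i$ to the $i$-th slice, while the adjacency part again contributes $d_i$ by the count above; hence $K_i=2d_i$. Substituting $K_i=d_i$ and $K_i=2d_i$, respectively, into the inequality of Theorem~\ref{thm2} yields the two displayed chains of bounds.

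I do not anticipate any serious obstacle: the argument is essentially a reduction to Theorem~\ref{thm2}. The one point requiring care is the combinatorial count showing that the $(k-1)!$ orderings of the non-$i$ vertices of each edge exactly cancel the normalizing factor $\frac{1}{(k-1)!}$ in the definition of $\mathcal{A}_{\mathcal{H}}$, so that $K_i$ equals the plain degree $d_i$ rather than a scaled version of it.
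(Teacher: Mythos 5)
Your proposal is correct and follows exactly the route the paper intends: the paper states Theorem~\ref{thm5} as an immediate application of Theorem~\ref{thm2}, with Lemma~\ref{lem6} supplying weak irreducibility and the slice sums computed as $K_i=d_i$ for $\mathcal{A}_{\mathcal{H}}$ (the $(k-1)!$ orderings cancelling the normalization $\tfrac{1}{(k-1)!}$) and $K_i=2d_i$ for $\mathcal{Q}_{\mathcal{H}}$. Your write-up in fact makes explicit the combinatorial count that the paper leaves implicit.
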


The shortest length of the circuits in $G_\mathcal{ A}$ is called the \textit{girth} of $G_\mathcal{ A}$.
If we bring an order to the slice sums $K_i$ $(i=1,2,\ldots,n)$ of tensor $\mathcal{A}$, the following result can be obtained.

\begin{thm}\label{thm1}
Let $\mathcal{A} = \left( {a_{i_1 i_2 ...i_m } } \right)\in \mathbb{R}^{[m,n]}_+ $ be a weakly irreducible tensor.  Suppose that $K_1  \leqslant K_2\leqslant \cdot\cdot\cdot \leqslant K_n$ and the girth of $G_\mathcal{ A}$ is $g$. Then
\[
\left ({K_1K_2\cdots K_g }\right)^{\frac{1}{g}}\leqslant  \rho \left( \mathcal{A} \right) \leqslant \left({K_{n - g + 1} K_{n - g + 2}\cdots K_n }\right)^{\frac{1}{g}}
.\]
\end{thm}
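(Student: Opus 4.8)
The plan is to deduce Theorem \ref{thm1} directly from Theorem \ref{thm2} by showing that, for \emph{every} circuit $\gamma \in C(G_\mathcal{A})$, the geometric mean $\left(\prod_{i\in\gamma}K_i\right)^{1/|\gamma|}$ is squeezed between the two quantities in the statement. Since the vertices along a circuit are distinct (a loop being the length-one case), a circuit $\gamma$ of length $\ell := |\gamma|$ uses $\ell$ distinct indices, and because $g$ is the girth we always have $g \leqslant \ell \leqslant n$. Weak irreducibility makes $G_\mathcal{A}^+(v)$ nonempty for each $v$, so each slice sum satisfies $K_i>0$ and all the geometric means below are well defined. Thus it suffices to prove two elementary facts about the ordered numbers $0 < K_1 \leqslant K_2 \leqslant \cdots \leqslant K_n$.

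First I would record a rearrangement bound: for distinct indices $i_1,\dots,i_\ell$, sorting gives
\[
K_1 K_2 \cdots K_\ell \;\leqslant\; \prod_{i\in\gamma}K_i \;\leqslant\; K_{n-\ell+1}K_{n-\ell+2}\cdots K_n,
\]
because among all products of $\ell$ distinct positive entries the smallest (resp. largest) is obtained by taking the bottom (resp. top) $\ell$ of the sorted list. Raising to the power $1/\ell$ expresses this as a bound between the geometric means of the bottom and top $\ell$ slice sums. Next I would establish monotonicity in the block size. Writing $P_k = (K_{n-k+1}\cdots K_n)^{1/k}$ for the geometric mean of the top $k$ entries and $Q_k = (K_1\cdots K_k)^{1/k}$ for that of the bottom $k$, the step from $k$ to $k+1$ reduces to comparing $P_k$ (resp. $Q_k$) with the newly adjoined factor $K_{n-k}$ (resp. $K_{k+1}$). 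Since $K_{n-k}$ is at most every entry of the top-$k$ block, $K_{n-k}\leqslant P_k$, whence $P_{k+1}\leqslant P_k$; dually $K_{k+1}\geqslant Q_k$ forces $Q_{k+1}\geqslant Q_k$. By induction $P_\ell \leqslant P_g$ and $Q_\ell \geqslant Q_g$ for every $\ell \geqslant g$.

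Combining the two facts gives, for each circuit $\gamma$,
\[
(K_1\cdots K_g)^{1/g} = Q_g \leqslant Q_\ell \leqslant \left(\prod_{i\in\gamma}K_i\right)^{1/\ell} \leqslant P_\ell \leqslant P_g = (K_{n-g+1}\cdots K_n)^{1/g}.
\]
Taking the minimum and maximum over $\gamma\in C(G_\mathcal{A})$ and inserting the two-sided estimate of Theorem \ref{thm2} then yields exactly the claimed inequalities.

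The only point demanding care -- the main obstacle -- is the role of the girth: the comparisons $P_\ell \leqslant P_g$ and $Q_\ell \geqslant Q_g$ point in the correct direction precisely because every circuit satisfies $\ell \geqslant g$, so the girth is exactly the hypothesis that collapses the variable-size per-circuit bounds of Theorem \ref{thm2} onto the fixed-size extremal products $(K_1\cdots K_g)$ and $(K_{n-g+1}\cdots K_n)$. I would also flag one convention that must be pinned down: if circuits were allowed to repeat vertices, the rearrangement step would fail and one would instead have to bound the multiplicity of each vertex in a length-$\ell$ circuit by $\ell/g$ (splitting the closed walk at repeated vertices into sub-circuits, each of length at least the girth) before applying a weighted version of the same extremal argument; under the usual distinct-vertex convention this complication does not arise.
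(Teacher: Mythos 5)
Your proposal is correct and follows essentially the same route as the paper: both reduce Theorem \ref{thm1} to Theorem \ref{thm2} via the same two elementary facts, namely the rearrangement bound $K_1\cdots K_\ell \leqslant \prod_{i\in\gamma}K_i \leqslant K_{n-\ell+1}\cdots K_n$ for a circuit on $\ell$ distinct vertices and the monotonicity of the top/bottom block geometric means in the block size, with the girth $g\leqslant|\gamma|$ ensuring the comparisons point the right way. The only (inessential) difference is that you establish the squeeze uniformly over all circuits before invoking Theorem \ref{thm2}, whereas the paper applies it just to the two extremal circuits; your explicit remark about the distinct-vertex convention is a point the paper leaves implicit.
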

\begin{proof}
By Theorem \ref{thm2}, we have there exist circuits $\gamma_1,\gamma_2\in C(G_\mathcal{ A})$ of length $|\gamma_1|\geqslant g,~|\gamma_2|\geqslant g$ such that
\[
\left({\prod\limits_{i \in \gamma_2 } {K_i } }\right)^{\frac{1}{|\gamma_2|}}\leqslant \rho \left( \mathcal{A} \right) \leqslant \left({\prod\limits_{i \in \gamma_1 } {K_i } }\right)^{\frac{1}{|\gamma_1|}}
.\]
Since
\[
\left({\prod\limits_{i \in \gamma_1 } {K_i } }\right)^{\frac{1}{|\gamma_1|}} \leqslant \left({K_{n - |\gamma_1| + 1} K_{n - |\gamma_1| + 2}\cdots K_n }\right)^{\frac{1}{|\gamma_1|}} \leqslant \left({K_{n - g + 1} K_{n - g + 2}\cdots  K_n }\right)^{\frac{1}{g}}
\]
and
\[
\left({\prod\limits_{i \in \gamma_2 } {K_i } }\right)^{\frac{1}{|\gamma_2|}}\geqslant \left(K_1K_2\cdots K_{|\gamma_2|}   \right)^{\frac{1}{|\gamma_2|}}\geqslant \left(K_1K_2\cdots K_g   \right)^{\frac{1}{g}},
\]
we obtain the theorem holds.
\end{proof}

For a connected $k$-uniform hypergraph $\mathcal{H}$, According to the definition of hypergraph,
 there are no loops in $G_{\mathcal{A}_{\mathcal{H}}}$. And since $\mathcal{A}_{\mathcal{H}}$ is symmetric tensor,
 we have the girth of $G_{\mathcal{A}_{\mathcal{H}}}$ is $2$.
 Clearly, the girth of $G_{\mathcal{Q}_{\mathcal{H}}}$ is $1$.
Hence, it follows from Theorem \ref{thm5} and \ref{thm1} that the follow result can be obtained, which are shown in \cite{Cooper2012} and \cite{Hu2015+}, respectively.

\begin{cor} For a connected $k$-uniform hypergraph $\mathcal{H}$,
if $\mathcal{H}$ is regular with degree $d$, then $\rho(\mathcal{A}_{\mathcal{H}})=d$ and $\rho(\mathcal{Q}_{\mathcal{H}})=2d$.
\end{cor}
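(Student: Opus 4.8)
The plan is to read the conclusion straight off Theorem \ref{thm5}, using the fact that regularity makes every circuit product collapse to a single value. Suppose $\mathcal{H}$ is $d$-regular, so that $d_i = d$ for every vertex $i = 1, 2, \ldots, n$. For the adjacency tensor I would first observe that, for an arbitrary circuit $\gamma \in C(G_{\mathcal{A}_{\mathcal{H}}})$,
\[
\left( \prod_{i \in \gamma} d_i \right)^{\frac{1}{|\gamma|}} = \left( d^{|\gamma|} \right)^{\frac{1}{|\gamma|}} = d,
\]
so the quantity being minimized and maximized in Theorem \ref{thm5} is the constant $d$ independently of $\gamma$. Hence the lower and upper bounds there coincide, and a squeeze yields $\rho(\mathcal{A}_{\mathcal{H}}) = d$.

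For the signless Laplacian tensor I would repeat the same computation with $2 d_i$ in place of $d_i$: for any $\gamma \in C(G_{\mathcal{Q}_{\mathcal{H}}})$,
\[
\left( \prod_{i \in \gamma} 2 d_i \right)^{\frac{1}{|\gamma|}} = \left( (2d)^{|\gamma|} \right)^{\frac{1}{|\gamma|}} = 2d,
\]
so the minimum and maximum in the second inequality of Theorem \ref{thm5} both equal $2d$, forcing $\rho(\mathcal{Q}_{\mathcal{H}}) = 2d$.

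As an alternative aligned with the paragraph immediately preceding the corollary, one may instead invoke Theorem \ref{thm1} with the recorded girth values $g = 2$ for $G_{\mathcal{A}_{\mathcal{H}}}$ and $g = 1$ for $G_{\mathcal{Q}_{\mathcal{H}}}$. Since regularity forces the slice sums to be constant, namely $K_i = d$ for $\mathcal{A}_{\mathcal{H}}$ and $K_i = 2d$ for $\mathcal{Q}_{\mathcal{H}}$, the ordered bounds $(K_1 \cdots K_g)^{1/g}$ and $(K_{n-g+1} \cdots K_n)^{1/g}$ both reduce to $d$ and to $2d$ respectively, giving the same conclusion.

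There is essentially no obstacle: the result is an immediate squeeze once the circuit products are seen to be independent of $\gamma$. The only point warranting a line of care is confirming that the relevant slice sums are exactly $d$ and $2d$ under regularity, i.e.\ that each edge through a vertex $i$ contributes $1$ to $K_i$ for the adjacency tensor (so $K_i = d_i$) and that the diagonal degree term adds a further $d_i$ for the signless Laplacian (so $K_i = 2 d_i$) -- which is precisely what lets Theorem \ref{thm5} be applied with the degrees in place.
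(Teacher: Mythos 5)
Your proposal is correct and matches the paper's own (very brief) justification, which likewise reads the result off Theorem \ref{thm5} (equivalently Theorem \ref{thm1} with the stated girths) after noting that regularity makes all the circuit products, or equivalently all slice sums $K_i$, equal to $d$ for $\mathcal{A}_{\mathcal{H}}$ and $2d$ for $\mathcal{Q}_{\mathcal{H}}$. Your check that each edge through $i$ contributes exactly $1$ to the slice sum of the adjacency tensor is the right detail to verify.
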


As we introduce in the first section, there are some results to find the spectral radius (largest eigenvalue) of nonnegative irreducible tensors. Here, we also give two theorems on the  minimum and the maximum characterizations of the spectral radius of nonnegative weakly irreducible tensors.

\begin{thm}\label{thm3}
Let $\mathcal{A} = \left( {a_{i_1 i_2 ...i_m } } \right) \in \mathbb{R}^{[m,n]}_+$ be a weakly irreducible tensor.  Then
\[
\min_{x\in \mathbb{R}_{++}^n} \max_{\gamma\in C(G_\mathcal{ A})}\left(\prod_{i\in\gamma}  \frac{(\mathcal{A}x^{m-1})_i}{x_i^{m-1}}\right)^{\frac{1}{|\gamma|}}=\rho(\mathcal{A})=\max_{x\in \mathbb{R}_{++}^n} \min_{\gamma\in C(G_\mathcal{ A})}\left(\prod_{i\in\gamma}  \frac{(\mathcal{A}x^{m-1})_i}{x_i^{m-1}}\right)^{\frac{1}{|\gamma|}}.
\]

\end{thm}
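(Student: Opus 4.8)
The plan is to reduce the entire statement to Theorem \ref{thm2} by a diagonal similarity. The decisive observation is that for a fixed positive vector $x$ the ratio $\frac{(\mathcal{A}x^{m-1})_i}{x_i^{m-1}}$ is precisely the $i$-th slice sum of a diagonally scaled copy of $\mathcal{A}$; once this is seen, the two-sided bound of Theorem \ref{thm2} transfers verbatim, and the minimax equalities follow by evaluating at the Perron eigenvector.

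First I would fix an arbitrary $x = (x_1,\ldots,x_n)^{\rm T}\in\mathbb{R}^n_{++}$, set $D = {\rm diag}(x_1,\ldots,x_n)$, and form $\mathcal{B} = D^{-(m-1)}\mathcal{A}D$. By Lemma \ref{lem5}, $\mathcal{B}$ is diagonal similar to $\mathcal{A}$, so $\rho(\mathcal{B}) = \rho(\mathcal{A})$. Since the entries of $D$ are positive, $b_{i i_2\cdots i_m}\neq 0$ if and only if $a_{i i_2\cdots i_m}\neq 0$; hence $\mathcal{B}\in\mathbb{R}^{[m,n]}_+$, the digraphs coincide, $G_\mathcal{B} = G_\mathcal{A}$, so $\mathcal{B}$ is weakly irreducible and $C(G_\mathcal{B}) = C(G_\mathcal{A})$. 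The key computation is that the $i$-th slice sum of $\mathcal{B}$ is
\[
K_i(\mathcal{B}) = \sum_{i_2,\ldots,i_m} x_i^{-(m-1)} a_{i i_2\cdots i_m} x_{i_2}\cdots x_{i_m} = \frac{(\mathcal{A}x^{m-1})_i}{x_i^{m-1}}.
\]

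Next, applying Theorem \ref{thm2} to $\mathcal{B}$ and using $\rho(\mathcal{B}) = \rho(\mathcal{A})$ yields, for this fixed $x$,
\[
\min_{\gamma\in C(G_\mathcal{A})}\Bigl(\prod_{i\in\gamma}\frac{(\mathcal{A}x^{m-1})_i}{x_i^{m-1}}\Bigr)^{\frac{1}{|\gamma|}}\leqslant \rho(\mathcal{A})\leqslant \max_{\gamma\in C(G_\mathcal{A})}\Bigl(\prod_{i\in\gamma}\frac{(\mathcal{A}x^{m-1})_i}{x_i^{m-1}}\Bigr)^{\frac{1}{|\gamma|}}.
\]
As $x$ was arbitrary, taking the infimum over $x$ in the right-hand inequality gives $\rho(\mathcal{A})\leqslant \min_x\max_\gamma(\cdots)$, and taking the supremum over $x$ in the left-hand inequality gives $\max_x\min_\gamma(\cdots)\leqslant \rho(\mathcal{A})$.

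Finally I would establish sharpness at the Perron eigenvector. By Lemma \ref{lem3} choose $u\in\mathbb{R}^n_{++}$ with $\mathcal{A}u^{m-1} = \rho(\mathcal{A})\,u^{[m-1]}$; then $\frac{(\mathcal{A}u^{m-1})_i}{u_i^{m-1}} = \rho(\mathcal{A})$ for every $i$, so every circuit product equals $\rho(\mathcal{A})^{|\gamma|}$ and both $\max_\gamma(\cdots)$ and $\min_\gamma(\cdots)$ equal $\rho(\mathcal{A})$ at $x=u$. This gives $\min_x\max_\gamma(\cdots)\leqslant\rho(\mathcal{A})$ and $\max_x\min_\gamma(\cdots)\geqslant\rho(\mathcal{A})$, which combined with the previous paragraph forces the two equalities. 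The only step needing genuine care is verifying $G_\mathcal{B}=G_\mathcal{A}$ together with the slice-sum identity, since that is where the whole argument hinges; after that the minimax statement follows formally, with no further estimation, from Theorem \ref{thm2} and the existence of a positive Perron eigenvector.
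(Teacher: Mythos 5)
Your proposal is correct and follows essentially the same route as the paper: diagonal similarity $\mathcal{B}=D^{-(m-1)}\mathcal{A}D$, the identification $K_i(\mathcal{B})=\frac{(\mathcal{A}x^{m-1})_i}{x_i^{m-1}}$, an application of Theorem \ref{thm2}, and sharpness at the positive Perron eigenvector from Lemma \ref{lem3}. If anything, you are slightly more careful than the paper in spelling out why $G_\mathcal{B}=G_\mathcal{A}$ and in separating the two inequality directions before concluding equality.
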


\begin{proof}
Let $\mathcal{B}=X^{-(m-1)}\mathcal{A}X$, where the matrix $X={\rm diag}(x_1,x_2,\ldots,x_n)$, $x_i>0$, $i=1,2,\ldots,n$. It is easy to see $\mathcal{B}$ is nonnegative weakly irreducible, and $G_\mathcal{ A}$ and $G_\mathcal{ B}$ are the same digraph. So by Theorem \ref{thm2}, we have
$$\min_{\gamma\in C(G_\mathcal{ A})}\left(\prod_{i\in\gamma}  K_i(\mathcal{B})\right)^{\frac{1}{|\gamma|}}\leqslant \rho(\mathcal{B})\leqslant\max_{\gamma\in C(G_\mathcal{ A})}\left(\prod_{i\in\gamma}  K_i(\mathcal{B})\right)^{\frac{1}{|\gamma|}}.$$
Calculation gives that
\[
 K_i(\mathcal{B})= \frac{1}{{x_i^{m - 1} }} \sum\limits^n_{ i_2,\cdots,i_m =1 } { {a_{ii_2 ...i_m } } x_{i_2 } ...x_{i_m } }=\frac{(\mathcal{A}x^{m-1})_i}{x_i^{m-1}},
\]
and since Lemma \ref{lem5} shows that $\rho(\mathcal{A})=\rho(\mathcal{B})$, we obtain
$$  \min_{\gamma\in C(G_\mathcal{ A})}\left(\prod_{i\in\gamma}
\frac{(\mathcal{A}x^{m-1})_i}{x_i^{m-1}}\right)^{\frac{1}{|\gamma|}}\leqslant  \rho(\mathcal{A})\leqslant \max_{\gamma\in C(G_\mathcal{ A})}\left(\prod_{i\in\gamma}
\frac{(\mathcal{A}x^{m-1})_i}{x_i^{m-1}}\right)^{\frac{1}{|\gamma|}}.$$
When $(x_1,x_2,\ldots, x_n)^{\rm T}$ is a positive eigenvector of $\mathcal{A}$ corresponding to $\rho(\mathcal{A})$, the equalities in the above equation hold, so we can get
\[
\min_{x\in \mathbb{R}_{++}^n} \max_{\gamma\in C(G_\mathcal{ A})}\left(\prod_{i\in\gamma}  \frac{(\mathcal{A}x^{m-1})_i}{x_i^{m-1}}\right)^{\frac{1}{|\gamma|}}=\rho(\mathcal{A})=\max_{x\in \mathbb{R}_{++}^n} \min_{\gamma\in C(G_\mathcal{ A})}\left(\prod_{i\in\gamma}  \frac{(\mathcal{A}x^{m-1})_i}{x_i^{m-1}}\right)^{\frac{1}{|\gamma|}}.
\]

\end{proof}
By the proof of the above theorem, we get the following result.

\begin{thm}
Let $\mathcal{A} = \left( {a_{i_1 i_2 ...i_m } } \right) \in \mathbb{R}^{[m,n]}_+$ be a weakly irreducible tensor.  Then
$$  \min_{\gamma\in C(G_\mathcal{ A})}\left(\prod_{i\in\gamma}
\frac{(\mathcal{A}x^{m-1})_i}{x_i^{m-1}}\right)^{\frac{1}{|\gamma|}}\leqslant  \rho(\mathcal{A})\leqslant \max_{\gamma\in C(G_\mathcal{ A})}\left(\prod_{i\in\gamma}
\frac{(\mathcal{A}x^{m-1})_i}{x_i^{m-1}}\right)^{\frac{1}{|\gamma|}},$$
where $x=(x_1,x_2,\ldots,x_n)$ is a positive vector.
\end{thm}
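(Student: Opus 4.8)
The plan is to observe that the desired double inequality is exactly the intermediate estimate produced inside the proof of Theorem \ref{thm3}, established there for an \emph{arbitrary} positive vector $x$ before one specializes to the Perron eigenvector. So I would simply isolate that construction and stop one step earlier.

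First I would fix an arbitrary positive vector $x=(x_1,x_2,\ldots,x_n)^{\rm T}$ and set $X={\rm diag}(x_1,x_2,\ldots,x_n)$, which is invertible since each $x_i>0$. Forming the diagonal similar tensor $\mathcal{B}=X^{-(m-1)}\mathcal{A}X$ as in Lemma \ref{lem5}, the entrywise formula $b_{i_1\cdots i_m}=x_{i_1}^{-(m-1)}a_{i_1\cdots i_m}x_{i_2}\cdots x_{i_m}$ shows, because every $x_i$ is positive, that $\mathcal{B}$ is nonnegative and has exactly the same zero/nonzero pattern as $\mathcal{A}$. Hence $G_\mathcal{B}=G_\mathcal{A}$ and $C(G_\mathcal{B})=C(G_\mathcal{A})$, and $\mathcal{B}$ is weakly irreducible.

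Next I would apply Theorem \ref{thm2} to $\mathcal{B}$, obtaining $\min_{\gamma}\left(\prod_{i\in\gamma}K_i(\mathcal{B})\right)^{1/|\gamma|}\leqslant\rho(\mathcal{B})\leqslant\max_{\gamma}\left(\prod_{i\in\gamma}K_i(\mathcal{B})\right)^{1/|\gamma|}$, where $\gamma$ ranges over $C(G_\mathcal{B})=C(G_\mathcal{A})$. A direct computation of the $i$-th slice sum gives $K_i(\mathcal{B})=x_i^{-(m-1)}\sum_{i_2,\ldots,i_m}a_{ii_2\cdots i_m}x_{i_2}\cdots x_{i_m}=(\mathcal{A}x^{m-1})_i/x_i^{m-1}$. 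Finally, Lemma \ref{lem5} yields $\rho(\mathcal{B})=\rho(\mathcal{A})$. Substituting these two identities into the inequality from Theorem \ref{thm2} gives precisely the claimed bounds, and since $x$ was arbitrary the statement holds for every positive vector.

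There is essentially no obstacle here, as the argument is a restatement of the first portion of the proof of Theorem \ref{thm3}; the only points meriting care are confirming that diagonal similarity preserves both nonnegativity and the digraph (so that Theorem \ref{thm2} applies to $\mathcal{B}$ over the identical circuit set $C(G_\mathcal{A})$), and verifying the slice-sum computation $K_i(\mathcal{B})=(\mathcal{A}x^{m-1})_i/x_i^{m-1}$ — both already dispatched in the preceding proof.
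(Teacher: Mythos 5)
Your proposal is correct and is precisely the argument the paper intends: the paper derives this theorem with the single remark ``By the proof of the above theorem,'' meaning exactly the first portion of the proof of Theorem \ref{thm3} (diagonal similarity $\mathcal{B}=X^{-(m-1)}\mathcal{A}X$, Theorem \ref{thm2} applied to $\mathcal{B}$, the slice-sum identity, and $\rho(\mathcal{B})=\rho(\mathcal{A})$) stopped before specializing $x$ to the Perron eigenvector. Nothing is missing.
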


From Theorem \ref{thm1}, the following result can be obtained.

\begin{thm}\label{thm4}
Let $\mathcal{A}\in \mathbb{R}^{[m,n]}_+$ be a weakly irreducible tensor. Let the girth of $G_\mathcal{ A}$ is $g$. Then
$$\min_{X\in D^n}  \left({\prod_{i=n-g+1}^n K_i(X^{-(m-1)}\mathcal{A}X)}\right)^{\frac{1}{g}}=\rho(\mathcal{A}) =\max_{X\in D^n}  \left({\prod_{i=1}^gK_i(X^{-(m-1)}\mathcal{A}X)}\right)^{\frac{1}{g}},$$
where the slice sums of tensor $X^{-(m-1)}\mathcal{A}X$ are in the order $K_1  \leqslant K_2 \leqslant\cdot\cdot\cdot \leqslant K_n$ and $D^n$ is the set of all the $n\times n$ positive diagonal matrices.
\end{thm}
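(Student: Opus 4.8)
The plan is to reduce Theorem \ref{thm4} to Theorem \ref{thm1} via the diagonal similarity transformation, exactly as Theorem \ref{thm3} reduces to Theorem \ref{thm2}. For any positive diagonal matrix $X = \mathrm{diag}(x_1,\ldots,x_n) \in D^n$, set $\mathcal{B} = X^{-(m-1)}\mathcal{A}X$. By Lemma \ref{lem5}, $\mathcal{B}$ is diagonal similar to $\mathcal{A}$, so $\rho(\mathcal{B}) = \rho(\mathcal{A})$; moreover $\mathcal{B}$ is again nonnegative weakly irreducible and $G_\mathcal{B} = G_\mathcal{A}$, so $G_\mathcal{B}$ has the same girth $g$.

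First I would apply Theorem \ref{thm1} to $\mathcal{B}$. Ordering its slice sums as $K_1(\mathcal{B}) \leqslant \cdots \leqslant K_n(\mathcal{B})$, Theorem \ref{thm1} gives
\[
\left(\prod_{i=1}^g K_i(\mathcal{B})\right)^{\frac{1}{g}} \leqslant \rho(\mathcal{A}) \leqslant \left(\prod_{i=n-g+1}^n K_i(\mathcal{B})\right)^{\frac{1}{g}}.
\]
Since this holds for every $X \in D^n$, taking the maximum of the left-hand quantity and the minimum of the right-hand quantity over $X$ yields
\[
\max_{X\in D^n}\left(\prod_{i=1}^g K_i(\mathcal{B})\right)^{\frac{1}{g}} \leqslant \rho(\mathcal{A}) \leqslant \min_{X\in D^n}\left(\prod_{i=n-g+1}^n K_i(\mathcal{B})\right)^{\frac{1}{g}}.
\]

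Next I would show both extrema are attained, so that these inequalities become equalities. The key computation, already recorded in the proof of Theorem \ref{thm3}, is that $K_i(\mathcal{B}) = (\mathcal{A}x^{m-1})_i / x_i^{m-1}$. By Lemma \ref{lem3} there is a positive eigenvector $x$ of $\mathcal{A}$ with $\mathcal{A}x^{m-1} = \rho(\mathcal{A})\, x^{[m-1]}$; choosing $X = \mathrm{diag}(x_1,\ldots,x_n)$ makes every slice sum equal, $K_i(\mathcal{B}) = \rho(\mathcal{A})$ for all $i$. For this $X$ the ordering is irrelevant and each product collapses, giving $\left(\prod_{i=1}^g K_i(\mathcal{B})\right)^{1/g} = \left(\prod_{i=n-g+1}^n K_i(\mathcal{B})\right)^{1/g} = \rho(\mathcal{A})$. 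Hence the maximum on the right is at least $\rho(\mathcal{A})$ and the minimum on the left is at most $\rho(\mathcal{A})$, which together with the displayed inequalities forces equality throughout.

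Since every ingredient is already available, there is no serious obstacle; the only point needing care is the verification that the eigenvector choice equalizes all slice sums and therefore realizes both the maximum and the minimum simultaneously (the ordering $K_1 \leqslant \cdots \leqslant K_n$ becomes vacuous precisely at the optimizer). This is exactly the mechanism that turned the one-sided bounds of Theorem \ref{thm2} into the sharp minimax of Theorem \ref{thm3}, now applied to the girth-refined bounds of Theorem \ref{thm1}.
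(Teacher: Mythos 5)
Your proposal is correct and follows exactly the route the paper intends: the paper gives no explicit proof, stating only that the result "can be obtained" from Theorem \ref{thm1}, and the mechanism you use — applying Theorem \ref{thm1} to $X^{-(m-1)}\mathcal{A}X$ for each $X\in D^n$ and then equalizing all slice sums at $\rho(\mathcal{A})$ via the positive eigenvector — is precisely the one used in the paper's proof of Theorem \ref{thm3}. No gaps.
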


\textbf{Remark.}  Brualdi gives the characterizations of the spectral radius of a nonnegative irreducible matrices with all diagonal entries zero ( Corollary 4.10 and 4.11 of \cite{RA1982}). Theorem \ref{thm3} and \ref{thm4} generalize them to general nonnegative weakly irreducible tensors without the condition that diagonal entries are zero.

\vspace{3mm}
\noindent
\textbf{References}

\end{document}